\documentclass[11pt]{amsart}
\usepackage{fullpage}
\usepackage{mathtools}
\usepackage{amsmath}
\usepackage{amsthm}
\usepackage{amssymb}
\usepackage{color}
\usepackage{hyperref}

\newtheorem{theorem}{Theorem}[section]

\newtheorem{lemma}[theorem]{Lemma}

\newtheorem{remark}[theorem]{Remark}

\newtheorem{problem}[theorem]{Problem}

\def\cC{{\mathcal{C}}}
\def\cF{{\mathcal{F}}}

\title{Monochromatic triangles with empty intersection \\ 
and Kneser Ramsey numbers }
\author{Igor Araujo}
\address{Department of Mathematical Sciences, University of Memphis, Memphis, TN 38111} 
\email{\parbox[t]{\linewidth}{iaaraujo@memphis.edu}.} 
\date{}

\begin{document}

\begin{abstract}
    Recently, Heath, McCourt, Parker, Schwieder, and Zerbib~\cite{heath} initiated the systematic study of the \emph{$r$-Kneser Ramsey number} $R_r^{KG}(s,t)$ and investigated related Ramsey-type problems.
    A central motivation for their work comes from a question of Holmsen, Hrusak, and Rold\'an-Pensado, who asked whether, for $n=2k-1$ and sufficiently large $k$, every red/blue edge-coloring of the complete graph on the vertex set $V = \binom{[n]}{k}$ necessarily contains a monochromatic triangle $ABC$ with $A,B,C \in V$ and $A \cap B \cap C = \emptyset$. 
    Heath, McCourt, Parker, Schwieder, and Zerbib established that this conclusion holds when $n \ge \frac{7k}{3}$ and $k\ge 12$. 
    We make substantial progress toward the problem of Holmsen, Hrusak, and Rold\'an-Pensado by proving that the conclusion already holds for every $k\ge 2$ whenever $n\ge 2k+1$. In addition, we obtain improved lower bounds for $R_r^{KG}(s,t)$  when $s$ and $t$ are fixed and $r$ is sufficiently large. 
\end{abstract}

\maketitle

\section{Introduction}

The classical Ramsey numbers $R(s,t)$ are among the most important parameters in extremal combinatorics~\cite{graham, ramsey, verstraete}, measuring the smallest integer $n$ for which every red/blue coloring of the edges of the complete graph $K_n$ necessarily contains a red copy of $K_s$ or a blue copy of $K_t$.
The Kneser graph $KG(n,r)$ is another fundamental object in combinatorics and topological graph theory~\cite{alon, kneser, lovasz}; its vertices are the $r$-subsets of $[n]$, and two vertices are adjacent precisely when the corresponding sets are disjoint.
Merging these two themes, Heath, McCourt, Parker, Schwieder, and Zerbib~\cite{heath} introduced the \emph{$r$-Kneser Ramsey number} $R_r^{KG}(s,t)$ as the minimum integer $n$ such that every red/blue edge-coloring of $KG(n,r)$ contains a red $K_s$ or a blue $K_t$. One of their motivations for studying these numbers comes from the following problem proposed by Holmsen, Hrusak, and Rold\'an-Pensado. 

\begin{problem}[Problem 3 in~\cite{heath}] \label{prob}
    Is it true that, for large enough $k$, in every red/blue edge-coloring of the complete graph on the vertex set $V = \binom{[2k-1]}{k}$ there is a monochromatic triangle $ABC$ with $A,B,C \in V$ and $A \cap B \cap C = \emptyset$.
\end{problem}

In order to avoid repeating the condition $A \cap B \cap C = \emptyset$ multiple times in the paper, we say that the triangle $ABC$ is \emph{non-intersecting} if $A \cap B \cap C = \emptyset$.
Heath, McCourt, Parker, Schwieder, and Zerbib made progress on Problem~\ref{prob} through the following result.

\begin{theorem}[Heath, McCourt, Parker, Schwieder, Zerbib~\cite{heath}] \label{thm:heath}
    Let $k \ge 12$ be an integer divisible by 6. Then in every red/blue edge-coloring of the complete graph on the vertex set $\binom{[\frac{7k}{3}]}{k}$ there is a monochromatic non-intersecting triangle.
\end{theorem}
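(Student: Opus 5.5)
The plan is to \emph{discretize} the ground set, reducing the statement to a finite Ramsey-type problem that does not depend on $k$. Since $6 \mid k$, write $m = k/3$ and partition $[\tfrac{7k}{3}]$ into seven blocks $B_1,\dots,B_7$ of size $m$. To each $3$-subset $T \subseteq [7]$ associate the $k$-set $S_T = \bigcup_{i\in T} B_i$. Then $S_{T_1}\cap S_{T_2}\cap S_{T_3} = \emptyset$ exactly when $T_1\cap T_2\cap T_3=\emptyset$. So it suffices to prove the following: every red/blue coloring of the pairs of some family $\mathcal F \subseteq \binom{[7]}{3}$ contains a monochromatic triangle $T_1T_2T_3$ with empty common intersection. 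If this finite statement turns out false, I would refine the partition into $14$ blocks of size $k/6$ and use $6$-subsets of $[14]$ instead, which is where divisibility by $6$ would enter.

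A first check shows that the naive route fails. One cannot simply find $6=R(3,3)$ sets any three of which have empty intersection. That would force each point to lie in at most two of the sets, giving $18 \le 2\cdot 7$ for triples of $[7]$ (and $36\le 28$ for $6$-subsets of $[14]$), both false. So the argument must exploit the Ramsey structure more cleverly.

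My concrete choice is to start with the seven lines $L_1,\dots,L_7$ of the Fano plane on $[7]$. Any two lines meet in exactly one point, and three lines have a common point only when they are concurrent. Hence among the $\binom73=35$ triangles on $\{S_{L_1},\dots,S_{L_7}\}$, only the $7$ concurrent triples are forbidden, and these form a dual Fano configuration. By Goodman's bound, every $2$-coloring of $K_7$ has at least $4$ monochromatic triangles. The question is whether all of them can lie among the $7$ concurrent triples.

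The main obstacle is exactly this step. I expect that there may exist exceptional colorings of $K_7$ all of whose monochromatic triangles are concurrent triples. I would first classify such colorings. Each forbidden triple corresponds to a point, any two forbidden triples share exactly one vertex, and the structure of $2$-colorings of $K_7$ with few monochromatic triangles is quite rigid (they are close to blow-ups of the $C_5$-coloring). These facts should cut the candidates down to a handful up to symmetry.

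For each surviving coloring, I would then add further sets $S_T$ with $T\notin$ Fano, for example complements of Fano lines intersected appropriately, or triples meeting a given line in one point. A new set $S_T$ forms non-intersecting triangles with most pairs of lines: it is non-intersecting with $L_i,L_j$ unless $L_i\cap L_j\in T$. I would then argue that the colors on the edges from $S_T$ to the lines must avoid creating an allowed monochromatic triangle, and derive a contradiction by pigeonhole on the colors of the edges $S_TS_{L_i}$ together with the forced structure of the exceptional coloring. If a clean hand argument is not available, the final step is a finite case check on a small subfamily of $\binom{[7]}{3}$, or of $\binom{[14]}{6}$ in the refined version, which completes the proof.
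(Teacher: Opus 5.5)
Your reduction is sound, and it needs only $3\mid k$. With seven blocks of size $k/3$, we have $S_{T_1}\cap S_{T_2}\cap S_{T_3}=\emptyset$ exactly when $T_1\cap T_2\cap T_3=\emptyset$. So it suffices to find a monochromatic triangle with empty common intersection among the seven sets $S_L$, $L$ a Fano line. \textbf{The gap is that you never prove this.} After Goodman's bound you only pose the question, predict that exceptional colorings may exist, and defer to an unspecified classification, extra sets, or a finite case check. That deferred step is the whole content of the theorem.

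Your prediction is also false, and the step closes in a few lines. Suppose every monochromatic triangle is a pencil (the three lines through some point $p$), and call $p$ red or blue according to the color of its pencil. Three red points $p,q,r$ are impossible:
\begin{itemize}
\item If $p,q,r$ are non-collinear, the lines $pq,qr,rp$ form a red triangle, since each edge lies in one of the three red pencils, and their common intersection is empty.
\item If $p,q,r$ lie on a line $L$, then $L$ is joined in red to all six other lines, because each meets $L$ in one of $p,q,r$. Two lines through different points of $L$ meet off $L$, so their edge must be blue. This gives a blue $K_{2,2,2}$, and only four of its eight triangles are concurrent (one for each point off $L$), so a blue non-concurrent triangle remains.
\end{itemize}
Hence there are at most two red and at most two blue points. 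By Goodman there are then exactly four monochromatic triangles, with red points $p_1,p_2$ and blue points $q_1,q_2$.

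Goodman's identity says the number of monochromatic triangles is $35-\frac12\sum_L r_L(6-r_L)$, where $r_L$ is the red degree of $L$. The value $4$ forces $\sum_L r_L(6-r_L)=62$. Since $\sum_L r_L$ is even, not all $r_L$ equal $3$, so exactly one line has $r_L\neq 3$. But the line $p_1p_2$ lies in two red pencils, so its red degree is at least $4$. The line $q_1q_2$ likewise has red degree at most $2$. These are two distinct lines with $r_L\ne 3$, a contradiction. So the seven Fano sets already suffice, and the $14$-block refinement and the hypothesis $6\mid k$ are unnecessary.

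For comparison, the paper never proves this statement directly; it follows from Theorem~\ref{thm:main}. For $k\ge 3$ we have $2k+1\le 7k/3$, so $\binom{[2k+1]}{k}\subseteq\binom{[7k/3]}{k}$. That theorem is proved in two steps:
\begin{itemize}
\item Lemma~\ref{lem:6sets} and connectivity of $KG(2k+1,k-1)$ show that all families $\{X\cup\{i\}\}$ with $|X|=k-1$ are monochromatic of a single color.
\item A red edge $AB$ with $|A\cap B|$ maximal is then combined with Lemma~\ref{lem:partition}.
\end{itemize}
The finite statement you reduced to, on $\binom{[7]}{3}$, is precisely the case $k=3$ of Theorem~\ref{thm:main}, so citing it would also close your gap. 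The completed Fano argument is a shorter, self-contained proof of the $7k/3$ bound for all $k$ divisible by $3$. However, the blow-up only reaches $n=7k/3$, not the paper's $n=2k+1$.
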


The main result of this note provides a substantial improvement to Theorem~\ref{thm:heath}.

\begin{theorem}\label{thm:main}
    Let $k \ge 2$. Then, in every red/blue edge-coloring of the complete graph on the vertex set $V = \binom{[2k+1]}{k}$, there is a monochromatic non-intersecting triangle.
\end{theorem}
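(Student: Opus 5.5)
The plan is to reduce Theorem~\ref{thm:main} to a finite, $k$-independent configuration. Let $\mathcal{H}$ be the $3$-uniform hypergraph on $V=\binom{[2k+1]}{k}$ whose edges are the non-intersecting triples. We want a small family $\mathcal{F}\subseteq V$ such that every red/blue colouring of $\binom{\mathcal{F}}{2}$ has a monochromatic triangle that is an edge of $\mathcal{H}$. A naive choice fails, which guides the construction. Six sets with every triple non-intersecting would need each element to lie in at most two of them, forcing $6k\le 2(2k+1)$, which is impossible. Also, pairwise disjointness alone cannot help, since $KG(2k+1,k)$ is triangle-free. So $\mathcal{F}$ must mix two kinds of non-intersecting triples: those containing a disjoint pair, and those that are non-intersecting without any disjoint pair.

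\textbf{Building blocks.} The basic observation is this: if $A\cap B=\emptyset$, then $ABC$ is non-intersecting for \emph{every} $C$. Hence if $AB$ is red, every other vertex $C$ has at least one blue edge to $\{A,B\}$. Fix a point $x\in[2k+1]$ and work mostly with sets avoiding $x$. These are $k$-subsets of a $2k$-set, so they come in complementary pairs $\{A,\bar A\}$. Any triple containing a complementary pair is non-intersecting, and so is any four-set consisting of two complementary pairs. The spare element $x$ then lets us add sets meeting several of these pairs in a controlled way. To do this, partition $[2k+1]\setminus\{x\}$ into a bounded number of blocks, say $P_1,\dots,P_m$ of sizes about $k/2$ (handling parity with $x$), and let $\mathcal{F}$ consist of unions of blocks, possibly together with $x$. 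Whether a triple of such sets is non-intersecting depends only on the block pattern, not on $k$ beyond these sizes. So the problem becomes a finite Ramsey-type statement about a fixed auxiliary hypergraph.

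\textbf{Finite case analysis.} Choose $\mathcal{F}$ to contain several complementary pairs $A_i,\bar A_i$ together with a few sets $D_j$ that meet the $A_i$'s so that many additional triples of the form $A_iA_{i'}D_j$ become non-intersecting. Then argue by cases on the colours of the edges $A_i\bar A_i$. Suppose two complementary pairs have the same colour, say red. For each $i$, every other vertex sends at least one blue edge to $\{A_i,\bar A_i\}$. Combining two such pairs forces a blue path structure among the four sets and the $D_j$'s, and we aim to close it into a blue non-intersecting triangle. If no two complementary pairs share a colour, pigeonhole with three pairs gives a contradiction immediately. Small values such as $k=2,3$, where blocks degenerate, can be checked directly.

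The main obstacle is choosing $\mathcal{F}$ correctly: it must be rich enough to be Ramsey for $\mathcal{H}$-triangles, yet realizable with only $2k+1$ points, since this counting is exactly what breaks for $n=2k-1$. I would first search, by hand or mentally by computer, over block patterns with $m\le 5$ blocks for the smallest such configuration. Only then would I write out the case analysis verifying that every $2$-colouring of it contains a monochromatic non-intersecting triangle.
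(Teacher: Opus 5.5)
\textbf{There is a genuine gap.} What you have written is a search plan, not a proof. Everything rests on the existence of a family $\mathcal{F}$ of boundedly many block-unions, of a $k$-independent shape, such that every red/blue colouring of the pairs of $\mathcal{F}$ has a monochromatic non-intersecting triangle. But $\mathcal{F}$ is never specified, the case analysis is never carried out, and you yourself defer this ``main obstacle'' to a later search.

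The one concrete mechanism you offer fails as stated: two same-coloured complementary pairs are supposed to force blue edges that close into a blue non-intersecting triangle. Take $k$ even, split $[2k+1]\setminus\{x\}$ into four blocks $P_1,\dots,P_4$ of size $k/2$, and write $ij$ for $P_i\cup P_j$. Three of these six sets form a non-intersecting triangle exactly when the three index pairs have no common index. Colour red the seven edges $\{12,34\}$, $\{13,24\}$, $\{14,23\}$, $\{12,13\}$, $\{12,14\}$, $\{13,14\}$, $\{24,34\}$, and colour all other edges blue. The only monochromatic triangles are then $\{12,13,14\}$ (red), $\{12,23,24\}$ and $\{13,23,34\}$ (blue). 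Each has a common index, so none is non-intersecting, even though all three complementary pairs are red. So the sets $D_j$, which you never describe, would have to carry the whole argument. They must also use $x$ essentially: a configuration avoiding $x$ would prove the statement already for $\binom{[2k]}{k}$, which is stronger than Theorem~\ref{thm:main}.

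Nor is there evidence that a bounded, $k$-independent configuration exists, and the paper's proof does not use one. It is a global, extremal argument.
\begin{itemize}
\item Lemma~\ref{lem:6sets} is applied to $X\cup\{r\},X\cup\{s\},X\cup\{t\},Y\cup\{r\},Y\cup\{s\},Y\cup\{t\}$, with $X,Y$ disjoint $(k-1)$-sets; this is where $n\ge 2k+1$ is used. It shows that each clique $\{X\cup\{i\}: i\notin X\}$ is monochromatic.
\item Connectivity of $KG(2k+1,k-1)$ then makes all these cliques one colour, say blue.
\item Next, take a red edge $AB$ with $|A\cap B|=k-\ell$ as large as possible. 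Then every edge whose endpoints share at least $k-\ell+1$ elements is blue.
\item Hence $\{F\cup X: X\in\binom{[2\ell]}{\ell-1}\}$ is a blue clique, and each of its members forms a non-intersecting triangle with $A$ and $B$. Lemma~\ref{lem:partition} then produces a blue non-intersecting triangle through $A$ or $B$.
\end{itemize}
The witnessing sets have block sizes governed by $\ell$, which can be anything from $2$ to $\lceil k/2\rceil$. The blueness of their mutual edges comes from the extremal choice of $\ell$, not from any fixed local configuration. These two ideas are what your plan lacks: the rigidity of the $(k-1)$-cliques, and the extremal choice of the red edge.
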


We also obtain improved lower bounds for the Kneser Ramsey numbers.

\begin{theorem}\label{thm:kneser}
    For every $r\ge 1$ and $s\ge t\ge 2$, we have that $R_r^{KG}(s,t) \ge sr + (t-2)(s-1)$.
\end{theorem}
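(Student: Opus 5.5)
The bound follows from an explicit coloring. I would mimic the classical lower bound $R(s,t)>(s-1)(t-1)$, which places the vertices in $t-1$ red cliques of size $s-1$ and colors every edge between different cliques blue. Set $n = sr + (t-2)(s-1) - 1$. It suffices to exhibit a red/blue coloring of the edges of $KG(n,r)$ with no red $K_s$ and no blue $K_t$. The idea is to sort the vertices (the $r$-subsets of $[n]$) into $t-1$ classes in such a way that every class contains at most $s-1$ pairwise disjoint sets.

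\emph{Construction.} Partition $[n]$ into a set $Y$ with $|Y| = sr-1$ and blocks $B_1,\dots,B_{t-2}$ with $|B_i| = s-1$. This is possible because $n = (sr-1)+(t-2)(s-1)$. For an $r$-set $A$ we define its class $c(A)\in\{0,1,\dots,t-2\}$:
\begin{itemize}
\item if $A\subseteq Y$, set $c(A)=0$;
\item otherwise, let $c(A)$ be the smallest index $i$ with $A\cap B_i\neq\emptyset$.
\end{itemize}
Every $r$-set receives exactly one class. Color an edge $AB$ of $KG(n,r)$ red if $c(A)=c(B)$, and blue otherwise.

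\emph{No blue $K_t$.} The vertices of a blue clique have pairwise distinct classes. There are only $t-1$ classes, so a blue clique has at most $t-1$ vertices.

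\emph{No red $K_s$.} A red clique consists of pairwise disjoint sets that all lie in one class.
\begin{itemize}
\item In class $0$, all sets lie in $Y$. Pairwise disjoint $r$-subsets of $Y$ number at most $\lfloor (sr-1)/r\rfloor = s-1$.
\item In class $i\ge 1$, every set meets $B_i$. Pairwise disjoint such sets meet $B_i$ in distinct elements, so there are at most $|B_i| = s-1$ of them.
\end{itemize}
Hence every red clique has at most $s-1$ vertices.

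This coloring of $KG(sr+(t-2)(s-1)-1,\,r)$ has neither a red $K_s$ nor a blue $K_t$, which gives $R_r^{KG}(s,t)\ge sr+(t-2)(s-1)$.

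There is no real obstacle here. The only substantive choice is the sizes: $Y$ must be just too small to hold $s$ disjoint $r$-sets, and each block $B_i$ must be just too small to be hit by $s$ disjoint sets. The hypothesis $s\ge t$ is not needed by the construction itself. Presumably it is imposed because $R^{KG}_r$ is not symmetric in $s$ and $t$, and the bound is meant to be applied with the larger parameter playing the role of $s$.
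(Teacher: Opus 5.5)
Your construction and both verifications are essentially identical to the paper's: it takes $\cF_1=\binom{[sr-1]}{r}$ and $t-2$ further blocks of $s-1$ points, assigns each remaining $r$-set to the first block it meets, colors red inside classes and blue across, and bounds red cliques by $\lfloor (sr-1)/r\rfloor$ resp.\ the block size and blue cliques by the number of classes. One correction to your closing aside: $R_r^{KG}(s,t)=R_r^{KG}(t,s)$ by swapping colors, so only the bound is asymmetric, and since $sr+(t-2)(s-1)-\bigl(tr+(s-2)(t-1)\bigr)=(s-t)(r-1)$, the hypothesis $s\ge t$ just selects the stronger of the two versions.
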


\begin{remark}
    For fixed $s,t\ge 3$, the bound in Theorem~\ref{thm:kneser} improves upon the estimate $R_r^{KG}(s,t) \ge R(s,t) + 2r-2 $ from Proposition 10 of~\cite{heath} provided that $r$ is sufficiently large.
    In fact, for the case $t=3 < s \le 6$, our result already yields stronger bounds even when $r=3$:
$$ R_3^{KG}(4,3) \ge 15 \, ,  \quad  R_3^{KG}(5,3) \ge 19 \, , \quad \text{and} \quad R_3^{KG}(6,3) \ge 23 .$$
\end{remark}

\subsection*{Organization of the paper} In section~\ref{sec:main}, we prove Theorem~\ref{thm:main}, and in section~\ref{sec:kneser}, we prove Theorem~\ref{thm:kneser}.

\section{Monochromatic triangles in $\binom{[2k+1]}{k}$} \label{sec:main}

In this section, we will prove Theorem~\ref{thm:main}. We first start with some observations. It is a known fact that every red/blue edge-coloring of $K_6$ contains two monochromatic triangles (see, e.g., Fact 3.3 in~\cite{balogh}). We need the following slightly stronger statement, where we obtain vertex-disjoint monochromatic triangles of the same color while avoiding monochromatic non-intersecting triangles.

\begin{lemma}\label{lem:6sets}
    Assume $A_1,A_2,A_3, B_1, B_2, B_3 \in V = \binom{[n]}{k}$ are sets such that for every $r,s,t \in [3]$ where $r\neq s$, we have that $A_tB_rB_s$ and $A_r A_s  B_t $ are non-intersecting. Then, in every red/blue edge-coloring of the complete graph on the vertex set $V$, either there is a monochromatic non-intersecting triangle, or $A_1A_2A_3$ and $B_1B_2B_3$ are both monochromatic triangles of the same color.
\end{lemma}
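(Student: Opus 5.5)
The plan is to argue by contradiction on the colouring of the six vertices $A_1,A_2,A_3,B_1,B_2,B_3$. Suppose the colouring has no monochromatic non-intersecting triangle. We first show that $A_1A_2A_3$ and $B_1B_2B_3$ are both monochromatic. Then a counting argument on the nine edges $A_rB_t$ forces them to have the same colour.

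\emph{Step 1 (both triangles are monochromatic).} Restrict the colouring to the complete graph on these six vertices; it is a copy of $K_6$. By the fact recalled above (Fact 3.3 in~\cite{balogh}), it contains at least two monochromatic triangles. Every triangle on these six vertices other than $A_1A_2A_3$ and $B_1B_2B_3$ has the form $A_rA_sB_t$ or $A_tB_rB_s$ with $r\neq s$. By hypothesis each such triangle is non-intersecting, so by our assumption it is not monochromatic. Hence the (at least) two monochromatic triangles must be exactly $A_1A_2A_3$ and $B_1B_2B_3$.

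\emph{Step 2 (same colour).} Suppose, for contradiction, that $A_1A_2A_3$ is red and $B_1B_2B_3$ is blue.
\begin{itemize}
\item Fix $B_t$. If two of its edges $B_tA_r$ and $B_tA_s$ with $r\neq s$ were red, then $A_rA_sB_t$ would be a red non-intersecting triangle, since $A_rA_s$ is red. So each $B_t$ sends at most one red edge to $\{A_1,A_2,A_3\}$. This gives at most $3$ red cross edges.
\item Symmetrically, fix $A_t$. Using that $B_rB_s$ is blue and that $A_tB_rB_s$ is non-intersecting, each $A_t$ sends at most one blue edge to $\{B_1,B_2,B_3\}$. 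This gives at most $3$ blue cross edges.
\end{itemize}
There are $9>3+3$ cross edges $A_rB_t$, which is a contradiction. Hence the two triangles have the same colour.

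I do not expect a genuine obstacle. The only point needing care is Step 1: we need the stronger "at least two monochromatic triangles" version of the $K_6$ fact, rather than just one, to conclude that both $A_1A_2A_3$ and $B_1B_2B_3$ are monochromatic.
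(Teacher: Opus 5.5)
Your proof is correct and follows the paper's argument essentially verbatim. The $K_6$ fact forces the two monochromatic triangles to be exactly $A_1A_2A_3$ and $B_1B_2B_3$, and then the bound of at most three red and at most three blue cross edges contradicts the nine edges $A_rB_t$ (like the paper, you implicitly assume the six sets are distinct, which holds in the application).
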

    
\begin{proof}
    We consider the coloring induced on the vertices $A_1,A_2,A_3, B_1, B_2, B_3 \in \binom{[n]}{k}$. There are at least two monochromatic triangles; either at least one of these triangles is non-intersecting, or they are precisely $A_1A_2A_3$ and $B_1B_2B_3$. Suppose that $A_1A_2A_3$ is colored red and $B_1B_2B_3$ is colored blue.
    For any $r\in [3]$, since $A_r B_s B_t$ is non-intersecting, if both $A_rB_s$ and $A_rB_t$ are colored blue, then there is a monochromatic non-intersecting triangle and we are done. Otherwise, each $A_r$ is incident to at most one blue edge into $\{B_1, B_2, B_3\}$. Consequently, there are at most three blue edges between $\{A_1, A_2, A_3\}$ and $\{B_1, B_2, B_3\}$. Analogously, there are at most three red edges between $\{A_1, A_2, A_3\}$ and $\{B_1, B_2, B_3\}$, which is a contradiction.
\end{proof}

The following lemma ensures that any bipartition of the family $W = \binom{[2t]}{t-1}$ inevitably yields a pair of sets on one side that is jointly disjoint from one of the two canonical halves of $[2t]$. This structural obstruction will be crucial later when we apply it to find a monochromatic non-intersecting triangle.

\begin{lemma}\label{lem:partition}
    Let $t \ge 2$ and $W = \binom{[2t]}{t-1}$. For every partition $W = W_1 \cup W_2$ there is either $S,T \in W_1$ such that $S\cap T \cap \{1, \dots, t\} = \emptyset$ or $S,T \in W_2$ such that $S\cap T \cap \{t+1, \dots, 2t\} = \emptyset$. 
\end{lemma}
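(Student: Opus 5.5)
The plan is to reduce the statement to the non-bipartiteness of a Kneser graph. Write $L=\{1,\dots,t\}$ and $R=\{t+1,\dots,2t\}$, and suppose for contradiction that the partition $W=W_1\cup W_2$ satisfies neither conclusion. Then for all $S,T\in W_1$ we have $S\cap T\cap L\neq\emptyset$, and for all $S,T\in W_2$ we have $S\cap T\cap R\neq\emptyset$.

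The key observation is that each of these conditions is implied by an actually empty intersection. Suppose $S,T\in W$ are disjoint. Then $S\cap T\cap L=\emptyset$ and $S\cap T\cap R=\emptyset$ both hold. So if $S$ and $T$ were both in $W_1$, the first alternative would hold, and if both were in $W_2$, the second would hold. Hence, under our assumption, any two disjoint members of $W$ lie in different parts. Equivalently, $W_1,W_2$ is a proper $2$-coloring of the Kneser graph $KG(2t,t-1)$, whose vertex set is exactly $W$.

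The remaining step, and the only one with content, is to show that $KG(2t,t-1)$ is not bipartite. The quickest route is to invoke Lov\'asz's theorem $\chi(KG(n,k))=n-2k+2$ (cited in the introduction). Here $n=2t$ and $k=t-1\ge 1$, so $n\ge 2k$ and
\[
\chi(KG(2t,t-1)) = 2t-2(t-1)+2 = 4 > 2 ,
\]
a contradiction.

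If a self-contained argument is preferred, I would instead exhibit an explicit odd cycle in $KG(2t,t-1)$, since that already rules out a $2$-coloring.
\begin{itemize}
\item For $t\in\{2,3\}$ one has $3(t-1)\le 2t$, so three pairwise disjoint $(t-1)$-sets exist and form a triangle.
\item In general, work in $\mathbb{Z}_{2t+1}$ minus one point, or directly with cyclic intervals of length $t-1$ in $\mathbb{Z}_{2t}$. Intervals $I_i$ and $I_{i+t-1}$ are disjoint, because $2(t-1)\le 2t$. Walking with step $t-1$, or alternating steps $t-1$ and $t$, closes up after an odd number of steps.
\end{itemize}
Checking this parity bookkeeping is the one place where care is needed. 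I expect it to be the main (though minor) obstacle, and I would simply rely on Lov\'asz's theorem to avoid it.
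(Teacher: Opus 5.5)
Your proof is correct, and it takes a genuinely different route from the paper's.

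\textbf{The paper's route.} The paper works directly with the two halves. It picks disjoint blocks $A,B\subseteq\{1,\dots,t\}$ and $C,D\subseteq\{t+1,\dots,2t\}$, with a case split on the parity of $t$. A case analysis on the four sets $A\cup C$, $A\cup D$, $B\cup C$, $B\cup D$ then produces a pair in one part that avoids the relevant half. That pair is typically not disjoint; for example, $A\cup C$ and $A\cup D$ share $A$.

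\textbf{Your route.} You observe that a disjoint pair satisfies both alternatives at once. This discards the halves entirely and shows the lemma is a weakening of the non-bipartiteness of $KG(2t,t-1)$. It also gives a stronger conclusion: a disjoint, hence distinct, pair inside one part, uniformly for all $t\ge 2$. By contrast, for $t=2$ the paper's $C$ and $D$ are empty, so $A\cup C=A\cup D$ and its case analysis can return $S=T$. In exchange, the paper's argument is completely explicit and uses no outside theorem.

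\textbf{On Lov\'asz's theorem.} Invoking it is much heavier than necessary, since you only need an odd cycle. One is immediate if you delete a point of $[2t]$ and work in $\mathbb{Z}_{2t-1}$ (not $\mathbb{Z}_{2t+1}$). The intervals $I_i=\{i,\dots,i+t-2\}$ satisfy $I_i\cap I_{i+t-1}=\emptyset$, and because $\gcd(t-1,2t-1)=1$, stepping by $t-1$ gives a cycle of odd length $2t-1$. Your caution about working in $\mathbb{Z}_{2t}$ is justified: there, stepping by $t-1$ closes up only after $2t$ steps when $t$ is even. With this replacement your proof is both self-contained and shorter than the paper's.
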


\begin{proof}
    We split the proof into two cases according to the parity of $t$.

    \textbf{Case 1:} $t=2s$ is even and $W = \binom{[4s]}{2s-1}$.
    Set 
    $$ A = \{1, \dots, s \}  \, , \, 
    B = \{s + 1, \dots, 2s \} \, , \, 
    C = \{2s+1, \dots, 3s-1 \} \, , \text{ and } \, 
    D = \{3s +1, \dots, 4s -1 \} .$$ 
    Note that $|A|=|B|= s$ and $|C|=|D| = s-1$. 

    \textbf{Case 2:} $t=2s+1$ is odd and $W = \binom{[4s+2]}{2s}$. 
    Set 
    $$ A = \{1, \dots, s \} \, , \,
    B = \{s + 1, \dots, 2s \} \, , \,
    C = \{2s+2, \dots, 3s+1 \} \, , \text{ and } \,
    D = \{3s +2, \dots, 4s+1 \} .$$ 
    Note that $|A|=|B|=|C|=|D| = s$. 

    In either case, we analyze the partition induced by the sets 
    $$ A\cup C,\, A \cup D,\, B\cup C,\, B \cup D \in W .$$ 
    If both $A\cup C$ and $A\cup D$ lie in $W_2$, then we are done, since 
    $$ C\cap D = \emptyset \quad \text{ and } \quad
    A \cap \{t+1\, ,\, \dots \, , \, 2t\} = \emptyset.$$ 
    Thus, assume that $A \cup C \in W_1$. If either $B\cup C$ or $B \cup D$ lies in $W_1$, then again we are done, because 
    $$ A \cap B = \emptyset \, ,\, \quad 
    C \cap \{1\, ,\, \dots \, , \, t\} = \emptyset \, ,\, \quad \text{ and } \quad
    D \cap \{1\, ,\, \dots \, , \, t\} = \emptyset .$$
    Finally, if both $B \cup C$ and $B \cup D$ lie in $W_2$, then we are also done,  as 
    $$ C\cap D = \emptyset \quad \text{ and } \quad
    B \cap \{t+1\, ,\, \dots \, , \, 2t\} = \emptyset. \eqno\qedhere $$ 
\end{proof}

We are now ready to prove our main result.

\begin{proof}[Proof of Theorem~\ref{thm:main}]
Let $n = 2k+1$. For every $S \subseteq [n]$ with $|S| = k-1$ and every $i \in [n] \setminus S$, define $S^{i} \coloneq S \cup \{i\}$. 
We first show that, for every $X \subseteq [n]$ with $|X| = k-1$, the family $\{X^{i} : i \in [n] \setminus X\}$ induces a monochromatic clique. 

Indeed, for arbitrary $r,s,t \notin X$, choose a set $Y \subseteq [n] \setminus (X \cup \{r,s,t\})$ of size $|Y| = k-1$. Such a set exists because $n \ge 2(k-1)+3 = 2k+1$. Since the collection
\[
\{X^{r}, X^{s}, X^{t}, Y^{r}, Y^{s}, Y^{t}\}
\]
satisfies the hypothesis of Lemma~\ref{lem:6sets} with $\{X^r, X^s, X^t\}$ and $\{Y^r, Y^s, Y^t\}$ playing the roles of $\{A_1, A_2, A_3\}$ and $\{B_1, B_2, B_3\}$, respectively, we conclude that $\{X^{r}, X^{s}, X^{t}\}$ is monochromatic, otherwise there is a monochromatic non-intersecting triangle. As $r,s,t \notin X$ were arbitrary, the entire $X$-clique $\{X^{i} : i \in [n] \setminus X\}$ is monochromatic.

Moreover, Lemma~\ref{lem:6sets} implies that if $|X| = |Y| = k-1$ and $X \cap Y = \emptyset$, then the $X$-clique and the $Y$-clique are monochromatic of the same color. Since the Kneser graph $KG(n,k-1)$ is connected, it follows that all $S$-cliques with $|S| = k-1$ have the same color; let us say they are all blue.

Let $\ell$ be minimal such that there exist sets $A,B \in V$ with $|A \cap B| = k - \ell$ and $AB$ is colored red. The discussion above shows that $\ell \ge 2$. Furthermore, since no triple $A B C$ with $A \cap B \cap C = \emptyset$ can be monochromatic, we also have $\ell \le \lceil k/2 \rceil$.

Without loss of generality, assume $AB$ is colored red, where
\[
A = \{1, \dots, \ell,\, 2\ell+1, \dots, k+\ell\}
\quad\text{and}\quad
B = \{\ell+1, \dots, 2\ell,\, 2\ell+1, \dots, k+\ell\}.
\]
Set $F = \{k+\ell+1, \dots, 2k+1\}$. Then $|F| = k - \ell + 1$, so any edge $XY$ with $F \subseteq X \cap Y$ is necessarily colored blue.
Define
\[
\cC \coloneq \{\, F \cup X : X \subseteq [2\ell],\ |X| = \ell - 1 \,\}.
\]
Then $\cC$ is monochromatic blue. For each $C \in \cC$, note that $AC$ and $BC$ cannot be simultaneously red. Partition $\cC$ as
\[
\cC_A = \{C \in \cC : AC \text{ is blue}\} \, ,
\qquad
\cC_B = \{C \in \cC : AC \text{ is red and } BC \text{ is blue}\}.
\]
By Lemma~\ref{lem:partition}, there exist $C_1, C_2 \in \cC$ such that either
\begin{itemize}
    \item $A \cap C_1 \cap C_2 = \emptyset$ and both $AC_1$ and $AC_2$ are blue, or
    \item $B \cap C_1 \cap C_2 = \emptyset$ and both $BC_1$ and $BC_2$ are blue. \hfill \qedhere
\end{itemize}
\end{proof}

\section{An edge-coloring avoiding monochromatic cliques} \label{sec:kneser}

In this section, we present a coloring that proves Theorem~\ref{thm:kneser}.
The coloring is inspired by the extremal families from~\cite{alon} and is a direct extension of the coloring used in the proof of Theorem~11 in~\cite{heath}.  

\begin{proof}[Proof of Theorem~\ref{thm:kneser}]
Let $N = (sr - 1) + (t-2)(s-1)$.
Consider the partition $\cF_1 \cup \dots \cup \cF_{t-1} = \binom{[N]}{r}$ defined as follows.  
Set 
\[
\cF_1 \coloneq \binom{[sr - 1]}{r},
\]
and for each $i \ge 2$, define
\[
\cF_i \coloneq \left\{ X \in \binom{[N]}{r} \setminus \bigcup_{j<i} \cF_j : 
X \cap \{ sr + (i-2)(s-1)\, , \, \dots \, , \, sr + (i-2)(s-1) + s-2 \} \neq \emptyset \right\}. 
\]

Observe that for every $i \in [t-1]$, the family $\cF_i$ does not contain a copy of $K_s$.  
We color every edge within each $\cF_i$ red, and every edge joining vertices in different $\cF_i$ blue. Under this coloring, there is no red $K_s$ and no blue $K_t$. Indeed, the graph induced by color blue is $(t-1)$-partite and each component of the graph induced by color red is within some part $\cF_i$, which is $K_s$-free. Consequently,
$$ R_r^{KG}(s,t) \;\ge\; N+1 = sr + (t-2)(s-1). \eqno\qedhere $$
\end{proof}

\section*{Acknowledgements}

I am grateful to J\'ozsef Balogh and the anonymous referees for useful comments on a previous version of this manuscript.

\end{document}